\newtheorem{theorem}{Theorem}
\newtheorem{corollary}[theorem]{Corollary}
\newtheorem{definition}[theorem]{Definition}
\newtheorem{example}[theorem]{Example}
\newtheorem{proposition}[theorem]{Proposition}
\newtheorem{remark}[theorem]{Remark}
\newenvironment{proof}[1][Proof]{\noindent \textbf{#1.} }{\  \rule{0.5em}{0.5em}}
\begin{document}

\title{\textbf{Normal Transport Surfaces in Euclidean 4-space $\mathbb{E}%
^{4} $ }}
\author{K. Arslan, B. Bulca, B. K. Bayram \& G. \"{O}zt\"{u}rk }
\date{December 2014}
\maketitle

\begin{abstract}
In the present paper we study normal transport surfaces in four-dimensional
Euclidean space $\mathbb{E}^{4}$ which are the generalization of surface
offsets in $\mathbb{E}^{3}$. We find some results of normal transport
surfaces in $\mathbb{E}^{4}$ of evolute and parallel type. Further, we give
some examples of these type of surfaces.
\end{abstract}

\section{Introduction}

\footnote{%
2000 Mathematics Subject Classifications: 53A04, 53C42
\par
Key words and phrases: Translation surface, parallel surface, evolute
surface, focal surface
\par
{}}The geometric modelling of free-form curves and surfaces is of central
importance for sophisticated CAD/CAM systems. Apart from the pure
construction of these curves and surfaces, the analysis of their quality is
equally important in the design and manufacturing process. It is for example
very important to test the convexity of a surface, to pinpoint inflection
points, to visualize flat points and to visualize technical smoothness of
surface \cite{HH1}.

The 3D offsets or parallel surfaces are very widely used in many
applications. these include tool path generation for 3N machining. \ However
3D offsets are particularly important and useful as pre-process
modifications to CAD geometry. By defining an 3D offset means moving a
surface of a 3D model by a constant "d" in a direction normal to the surface
of the model. Offset techniques for surfaces has been extensively studied by
Mechawa (\cite{Ma}) and Pham (\cite{P}). Generally offsets of 3D models are
achieved by first offsetting all surfaces \ of the model and then trimming
and extending these offsets to reconstruct a closed 3D model (\cite{Fa},
\cite{Fo}).

Further, focal surfaces are known in the field of line congruences. Line
congruences have been introduced in the field of visualization by Hagen and
Pottmann (see, \cite{Hha}). Focal surfaces are also used as a surface
interrogation tool to analyses the "quality" of the surface before further
processing of the surface, for example in a NC-milling operation (see \cite%
{HH1}).

The generalized focal surfaces are related to hedgehog diagrams. Instead of
drawing surface normals proportional to a surface value, only the point on
the surface normal proportional to the function is drawing. The loci of all
these points is the generalized focal surface. This method was introduced by
Hagen and Hahmann (\cite{HH1}, \cite{HH2}) and is based on the concept of
focal surface which are known from line geometry. The focal surfaces are the
loci of all focal points of special congruence, the normal congruence.
Recently the present authors considered parallel and focal surfaces and
their curvature properties (see \cite{OA}).

The normal transport surface $\widetilde{M}$ of $M$ are generalization of
offset surfaces to $4$-dimensional Euclidean space $\mathbb{E}^{4}$ \cite{Fr}%
. Observe that, evolute surfaces and parallel type surfaces in $\mathbb{E}%
^{4}$ are the special type normal transport surfaces \cite{Kr}, \cite{Ce},
\cite{Fr}. Parallel type surface are widely used in geometry and
mathematical physics. We want to refer the reader to da Costa \cite{Co} for
an application in quantum mechanics in curved spaces.

The paper organized as follows. In section $2,$ we briefly considered basic
concepts of surfaces in Euclidean spaces. In section $3,$ we consider some
known results about the surfaces with flat normal bundle. In the final
section, we consider normal transport surfaces in $\mathbb{E}^{4}.$ Further
we give some examples of evolute and parallel type surfaces in $\mathbb{E}%
^{4}$.

\section{\textbf{Preliminaries}}

In the present section we recall definitions and results of \cite{Fr}. Let $%
M $ be a local surface in $\mathbb{E}^{n+2}$ given with the regular patch $%
x(u,v)$ : $(u,v)\in D\subset \mathbb{E}^{2}$. The tangent space $T_{p}(M)$
to $M$ at an arbitrary point $p=x(u,v)$ of $M$ is spanned by $\left \{
x_{u},x_{v}\right \} $. Further, the coefficients of the first fundamental
form of $M$ are given by
\begin{equation}
g_{_{11}}=\left \langle x_{u},x_{u}\right \rangle ,g_{_{12}}=\left \langle
x_{u},x_{v}\right \rangle ,g_{22}=\left \langle x_{v},x_{v}\right \rangle ,
\label{2.1}
\end{equation}%
where $\left \langle ,\right \rangle $ is the Euclidean inner product. Let
us denote by%
\begin{equation}
ds^{2}=\sum \limits_{i,j=1}^{2}g_{ij}du^{i}du^{j}.  \label{2.2}
\end{equation}

Let us choose $n$ linearly independent, orthogonal unit normal vectors $%
N_{\alpha },$ $\alpha =1,2,...,n$ spanning the normal space $T_{p}^{\perp }M$
at point $p=x(u,v).$ For each $p\in M$, consider the decomposition $T_{p}%
\mathbb{E}^{n+2}=T_{p}M\oplus T_{p}^{\perp }M,$ where $T_{p}^{\perp }M$ is
the orthogonal component of $T_{p}M$ in $\mathbb{E}^{n+2}$. Let $\overset{%
\sim }{\nabla }$ be the Riemannian connection of $\mathbb{E}^{n+2}$ then the
Gauss equation of the surface $M$ is given by
\begin{equation}
x_{u^{i}u^{j}}=\widetilde{\nabla }_{x_{u^{i}}}x_{u^{j}}=\sum_{k=1}^{2}\Gamma
_{ij}^{k}x_{u^{k}}+\sum_{\alpha =1}^{n}\ c_{ij}^{\alpha }N_{\alpha },
\label{2.3}
\end{equation}%
where
\begin{equation}
c_{ij}^{\alpha }=\left\langle x_{u^{i}u^{j}},N_{\alpha }\right\rangle ;\text{
}c_{ij}^{\alpha }=c_{ji}^{\alpha },  \label{2.4}
\end{equation}%
are the coefficients of the second fundamental form and%
\begin{equation}
\Gamma _{ij}^{k}=\sum_{l=1}^{2}g^{lk}\left( \frac{\partial g_{jl}}{\partial
u^{i}}+\frac{\partial g_{li}}{\partial u^{j}}-\frac{\partial g_{ij}}{%
\partial u^{l}}\right) ,  \label{2.5}
\end{equation}%
are the Christoffel symbols corresponding to $x(u,v)$.

Further, the Weingarten equation of the surface $M$ is given by%
\begin{equation}
(N_{\alpha })_{u^{i}}=\widetilde{\nabla }_{x_{u^{i}}}N_{\alpha
}=-\sum_{k=1}^{2}\ c_{\alpha }^{ik}x_{u^{k}}+\sum_{\beta =1}^{n}\
T_{i}^{\alpha \beta }N_{\beta },  \label{2.6}
\end{equation}%
where%
\begin{equation}
c_{\alpha }^{ik}=\sum_{j=1}^{2}\ c_{ij}^{\alpha }g^{jk};\text{ }c_{\alpha
}^{ik}=c_{\alpha }^{ki},  \label{2.7}
\end{equation}%
are the Weingarten forms of M with respect to some unit normal vector $%
N_{\alpha }$ and \
\begin{equation}
T_{i}^{\alpha \beta }=\left \langle (N_{\alpha })_{u^{i}},N_{\beta }\right
\rangle ;T_{i}^{\alpha \beta }=-T_{i}^{\beta \alpha }\text{, }i=1,2,
\label{2.8}
\end{equation}%
are the torsion coefficients\ with $\alpha ,\beta =1,...,n$ and
\begin{equation}
\left( g^{ij}\right) _{i,j=1,2}=\frac{1}{g}\left[
\begin{array}{cc}
g_{22} & -g_{12} \\
-g_{21} & g_{11}%
\end{array}%
\right] ,\text{ }g=g_{11}g_{22}-g_{12}^{2}\text{.}  \label{2.9}
\end{equation}%
A simple calculation shows that%
\begin{equation}
c_{ij}^{\alpha }=\left \langle x_{u^{i}u^{j}},N_{\alpha }\right \rangle
=-\left \langle x_{u^{i}},\left( N_{\alpha }\right) _{u^{j}}\right \rangle .
\label{2.9*}
\end{equation}

The Gaussian curvature of the surface $M$ is defined by%
\begin{equation}
K=\sum_{\alpha =1}^{n}K_{\alpha },\text{ \ \ }K_{\alpha }=\frac{%
c_{11}^{\alpha }c_{22}^{\alpha }-(c_{12}^{\alpha })^{2}}{g}.  \label{2.10}
\end{equation}%
where $K_{\alpha }$ is the $\alpha ^{th}$ Gaussian curvature of the surface $%
M$. The Gaussian curvature vanishes identically for so-called flat surface.
Observe that
\begin{equation}
\ K_{\alpha }=c_{\alpha }^{11}c_{\alpha }^{22}-(c_{\alpha }^{12})^{2}.
\label{2.11}
\end{equation}

The mean curvature vector field $\overrightarrow{H}$ of the surface $M$ is
defined by%
\begin{equation}
\overrightarrow{H}=\sum_{\alpha =1}^{n}H_{\alpha }N_{\alpha },  \label{2.12}
\end{equation}%
where
\begin{equation}
H_{\alpha }=\frac{1}{2}\sum \limits_{i,j=1}^{2}\text{\ }g^{ij}c_{ij}^{%
\alpha }=\frac{g_{22}c_{11}^{\alpha }+g_{11}c_{22}^{\alpha
}-2g_{12}c_{12}^{\alpha }}{2g},  \label{2.13}
\end{equation}%
is the $\alpha ^{th}$\textbf{\ }mean curvature of the surface $M$ with
respect to the unit normal vector $N_{\alpha }.$ The mean curvature $H$ of $%
M $ is defined by $H=\left \Vert \overrightarrow{H}\right \Vert .~$The mean
curvature (vector) vanishes identically for so-called minimal surface.
Observe that
\begin{equation}
H_{\alpha }=\frac{c_{\alpha }^{11}+c_{\alpha }^{22}}{2}.  \label{2.14}
\end{equation}

The curvature tensor of the normal bundle $NM$ of the surface $M$ is defined
by%
\begin{equation}
\begin{array}{l}
S_{ij}^{\alpha \beta }=\left( T_{i}^{\alpha \beta }\right)
_{u^{j}}-\left( T_{j}^{\alpha \beta }\right)
_{u^{i}}+\sum\limits_{\sigma =1}^{n}\left( T_{i}^{\alpha \sigma
}T_{j}^{\sigma \beta }-T_{j}^{\alpha \sigma
}T_{i}^{\sigma \beta }\right) , \\
=\sum\limits_{m,n=1}^{2}\left( c_{1m}^{\alpha }c_{n2}^{\beta
}-c_{2m}^{\alpha }c_{n1}^{\beta }\right) \text{\ }g^{mn};1\leq
\alpha ,\beta
\leq n.%
\end{array}
\label{2.15*}
\end{equation}%
The equality
\begin{equation}
S_{N}^{\alpha \beta }=\frac{1}{\sqrt{g}}S_{12}^{\alpha \beta },  \label{2.16}
\end{equation}%
is called the normal sectional curvature with respect to the plane $\Pi
=span\left\{ x_{u},x_{v}\right\} $. For the case $n=2$ the scalar curvature
of its normal bundle is defined as%
\begin{equation}
K_{N}=S_{N}^{12}=\frac{1}{\sqrt{g}}S_{12}^{12}.  \label{2.17}
\end{equation}%
which is also called normal curvature of the surface $M$ in $\mathbb{E}^{4}.$
Observe that%
\begin{equation}
K_{N}=\frac{1}{\sqrt{g}}\left( \left( T_{2}^{12}\right) _{u}-\left(
T_{1}^{12}\right) _{v}\right) .  \label{2.18}
\end{equation}

\section{\textbf{Known Results}}

Let M be a local surface in $\mathbb{E}^{n+2}$ given with the surface patch $%
x(u,v)$ : $(u,v)\in D\subset \mathbb{E}^{2}.$ The mean curvature vector $%
\overrightarrow{H}$ is called parallel in the normal bundle if and only if
\begin{equation}
\left( H_{\alpha }\right) _{u}^{\bot }=0,\left( H_{\alpha }\right)
_{v}^{\bot }=0,  \label{4.1}
\end{equation}%
or equivalently%
\begin{equation}
\left( H_{\alpha }\right) _{u^{i}}=\sum \limits_{\beta
=1}^{n}H_{\beta }T_{i}^{\alpha \beta }.  \label{4.2}
\end{equation}%
for all $i=1,2$, $\alpha =1,...,n$ with respect to an arbitrary orthonormal
frame $N_{1},...N_{n}~$\cite{Fr}.

\begin{proposition}
\cite{Fr} The mean curvature vector $\overrightarrow{H}$ is called parallel
in the normal bundle if and only if the squared mean curvature $\left \Vert
\overrightarrow{H}\right \Vert ^{2}$of $M$ is a constant function.
\end{proposition}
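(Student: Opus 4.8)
The plan is to pass between the coordinate-free condition $\nabla^{\perp}\overrightarrow{H}=0$ and the component identity (\ref{4.2}), and then to differentiate the scalar $\left\Vert \overrightarrow{H}\right\Vert ^{2}=\sum_{\alpha }H_{\alpha }^{2}$ directly. First I would record how the normal covariant derivative acts on $\overrightarrow{H}=\sum_{\alpha }H_{\alpha }N_{\alpha }$. Taking the normal component of the Weingarten relation (\ref{2.6}) gives $\nabla _{x_{u^{i}}}^{\perp }N_{\alpha }=\sum_{\beta }T_{i}^{\alpha \beta }N_{\beta }$, so that after relabelling the summation indices and using the skew-symmetry $T_{i}^{\alpha \beta }=-T_{i}^{\beta \alpha }$ from (\ref{2.8}),
\begin{equation*}
\nabla _{x_{u^{i}}}^{\perp }\overrightarrow{H}=\sum_{\alpha }\Big( (H_{\alpha })_{u^{i}}-\sum_{\beta }H_{\beta }T_{i}^{\alpha \beta }\Big) N_{\alpha }.
\end{equation*}
Thus $\overrightarrow{H}$ is parallel in the normal bundle precisely when (\ref{4.2}) holds, which is the bridge the whole argument rests on.

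For the forward implication I would simply differentiate the squared norm. Since the ambient connection is metric and the $N_{\alpha }$ are orthonormal, $\partial _{u^{i}}\left\Vert \overrightarrow{H}\right\Vert ^{2}=2\sum_{\alpha }H_{\alpha }(H_{\alpha })_{u^{i}}$. Substituting (\ref{4.2}) turns the right-hand side into $\sum_{\alpha ,\beta }H_{\alpha }H_{\beta }T_{i}^{\alpha \beta }$, which vanishes because $H_{\alpha }H_{\beta }$ is symmetric in $(\alpha ,\beta )$ while $T_{i}^{\alpha \beta }$ is skew-symmetric. Hence both partial derivatives of $\left\Vert \overrightarrow{H}\right\Vert ^{2}$ are zero and the function is constant. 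This half is routine and needs only the antisymmetry of the torsion coefficients.

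The converse is where I expect the real work. Constancy of $\left\Vert \overrightarrow{H}\right\Vert ^{2}$ only yields $\sum_{\alpha }H_{\alpha }(H_{\alpha })_{u^{i}}=0$, equivalently $\left\langle \nabla _{x_{u^{i}}}^{\perp }\overrightarrow{H},\overrightarrow{H}\right\rangle =0$; that is, $\nabla ^{\perp }\overrightarrow{H}$ is forced to be orthogonal to $\overrightarrow{H}$ inside the normal space, but not a priori zero. To upgrade orthogonality to vanishing I would exploit the codimension-two setting $n=2$ of $\mathbb{E}^{4}$, where the normal space is a plane and the orthogonal complement of $\overrightarrow{H}$ in it is one-dimensional, spanned by $-H_{2}N_{1}+H_{1}N_{2}$. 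Writing $\nabla _{x_{u^{i}}}^{\perp }\overrightarrow{H}=\lambda _{i}(-H_{2}N_{1}+H_{1}N_{2})$, the task reduces to showing the single functions $\lambda _{i}$ vanish. I regard this decoupling step, rather than the differentiation, as the main obstacle: $\lambda _{i}$ carries no reason to vanish for a wholly arbitrary surface, so the converse has to be read together with the standing hypotheses on the class of surfaces under consideration, and the key point will be to use the extra structural relations available there to force $\lambda _{i}\equiv 0$.
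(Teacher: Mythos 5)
Your forward implication is, in substance, exactly the paper's proof: multiply (\ref{4.2}) by $H_{\alpha}$, sum over $\alpha$, and let the skew-symmetry $T_{i}^{\alpha \beta }=-T_{i}^{\beta \alpha }$ of (\ref{2.8}) annihilate the double sum $\sum_{\alpha ,\beta }H_{\alpha }H_{\beta }T_{i}^{\alpha \beta }$; your preliminary identification of $\nabla ^{\perp }\overrightarrow{H}=0$ with the component equation (\ref{4.2}) is also correct and is precisely how the paper sets the condition up at the start of Section 3. So for the direction the paper actually proves, the two arguments coincide.

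The difference lies in the converse, and there your suspicion is the right one. The paper's entire treatment of that direction is the sentence ``The converse statement of the theorem is trivial''; no argument is given, and none exists at this level of generality. As you note, constancy of $\left\Vert \overrightarrow{H}\right\Vert ^{2}$ yields only $\sum_{\alpha }H_{\alpha }(H_{\alpha })_{u^{i}}=0$, i.e. $\nabla _{x_{u^{i}}}^{\perp }\overrightarrow{H}\perp \overrightarrow{H}$, and nothing forces your coefficients $\lambda _{i}$ to vanish. In fact the reverse implication fails for a general surface in $\mathbb{E}^{4}$: by the theorem of Chen recalled at the end of Section 3 (together with the fact that a minimal surface of a hypersphere of $\mathbb{E}^{4}$ automatically has flat normal bundle), a nonzero parallel $\overrightarrow{H}$ forces $K_{N}\equiv 0$; and even on a surface with flat normal bundle, computing in a parallel orthonormal normal frame shows that parallelism of $\overrightarrow{H}$ means both components $H_{1},H_{2}$ are constant, whereas constancy of $\left\Vert \overrightarrow{H}\right\Vert ^{2}$ only fixes $H_{1}^{2}+H_{2}^{2}$ and allows the pair $(H_{1},H_{2})$ to rotate---an inequivalent, strictly weaker condition, already realized without parallelism among rotation surfaces in $\mathbb{E}^{4}$ whose profile curve has torsion. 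So the obstacle you flagged is not a missing idea on your side but a defect of the proposition itself: only the implication ``parallel $\Rightarrow$ constant'' is established by the paper, by \cite{Fr}, or by your argument, and your refusal to certify the converse is the mathematically correct position. This is not an idle point for the paper, since its later theorem characterizing when $\widetilde{M}_{H}$ is a parallel surface silently invokes exactly this unproven converse.
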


\begin{proof}
Suppose $H_{\alpha }\neq 0$ and the mean curvature vector $\overrightarrow{H}
$ is parallel in the normal bundle. Multiplying the first order differential
equation (\ref{4.2}) by $H_{\alpha }$ gives
\begin{equation*}
H_{\alpha }\left( H_{\alpha }\right) _{u^{i}}=\sum \limits_{\beta
=1}^{n}H_{\alpha }H_{\beta }T_{i}^{\alpha \beta }=0,
\end{equation*}%
for all $\alpha =1,...,n.$ Summing over $\alpha $ shows%
\begin{eqnarray*}
\frac{1}{2}\frac{\partial }{\partial u^{i}}\left \Vert \overrightarrow{H}%
\right \Vert ^{2} &=&\sum \limits_{\alpha =1}^{n}H_{\alpha }\left(
H_{\alpha }\right) _{u^{i}} \\
&=&\sum \limits_{\alpha =1}^{n}\sum \limits_{\beta =1}^{n}H_{\alpha
}H_{\beta }T_{i}^{\alpha \beta }=0,
\end{eqnarray*}%
where the right hand side vanishes automatically due to the skew-symmetric
of the torsion coefficients \cite{Fr}. Thus, one get
\begin{equation*}
\left \Vert \overrightarrow{H}\right \Vert ^{2}=\sum \limits_{\alpha
=1}^{n}H_{\alpha }^{2}=const.
\end{equation*}%
The converse statement of the theorem is trivial.
\end{proof}

\begin{definition}
A local surface of \ $\mathbb{E}^{n+2}$ is said to have flat normal bundle
if and only if the orthonormal frame $N_{1},...N_{n}$ of $M$ is of torsion
free.
\end{definition}

\begin{remark}
The existence of flat normal bundle of $M$ is equivalent to say that normal
curvature $K_{N}$ of $M$ vanishes identically.
\end{remark}

The following classification result due to Chen from \cite{Ch}.

\begin{theorem}
Let $M$ be an immersed surface in $\mathbb{E}^{n+2}$. If $\overrightarrow{H}%
\neq 0$ is parallel in the normal bundle then either $M$ is a minimal
surface of a hypersphere of $\mathbb{E}^{n+2}$, or it has flat normal bundle.
\end{theorem}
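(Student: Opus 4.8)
The plan is to exploit the fact that the mean curvature direction is a \emph{parallel} unit normal field and then to read the normal curvature off the Ricci equation. Since $\overrightarrow{H}\neq 0$ is parallel in the normal bundle, the Proposition gives $\left\Vert \overrightarrow{H}\right\Vert =\mathrm{const}\neq 0$, so I may rotate the orthonormal normal frame so that $N_{1}=\overrightarrow{H}/\left\Vert \overrightarrow{H}\right\Vert $, with $H_{1}=\left\Vert \overrightarrow{H}\right\Vert $ and $H_{\alpha}=0$ for $\alpha\geq 2$. Substituting this into (\ref{4.2}) forces $H_{1}T_{i}^{\alpha 1}=0$, hence $T_{i}^{1\beta}=0$ for all $i,\beta$; that is, $N_{1}$ is a parallel normal field. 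Writing $A_{\alpha}=(c_{\alpha }^{ik})$ for the Weingarten (shape) operator of $N_{\alpha}$, I record that $\mathrm{tr}\,A_{1}=2\left\Vert \overrightarrow{H}\right\Vert $ while $\mathrm{tr}\,A_{\alpha}=2H_{\alpha}=0$ for $\alpha\geq 2$.

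The key step is a commutation relation coming from (\ref{2.15*}). Because $T_{i}^{1\beta}=0$, the first line of (\ref{2.15*}) shows $S_{ij}^{1\beta}=0$, so the second line yields $\sum_{m,n}(c_{1m}^{1}c_{n2}^{\beta}-c_{2m}^{1}c_{n1}^{\beta})g^{mn}=0$ for every $\beta$; in operator form this is exactly $[A_{1},A_{\beta}]=0$. Equivalently, the parallel field $N_{1}$ satisfies $R^{\perp}(\cdot,\cdot)N_{1}=0$, and the Ricci equation (the ambient $\mathbb{E}^{n+2}$ being flat) converts this into the vanishing of all brackets $[A_{1},A_{\beta}]$. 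Thus $A_{1}$ is a symmetric operator of constant trace that commutes with every shape operator of $M$.

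I would then split according to the principal curvatures of $A_{1}$. On the open set $\Omega$ where $A_{1}$ has two distinct eigenvalues its eigenspaces are one-dimensional, and any $A_{\beta}$ commuting with $A_{1}$ must preserve them; hence all the $A_{\beta}$ are simultaneously diagonalized by the principal frame of $A_{1}$. Simultaneously diagonal operators pairwise commute, so by (\ref{2.15*})--(\ref{2.18}) every $S_{12}^{\alpha\beta}$, and in particular $K_{N}$, vanishes on $\Omega$, i.e. $M$ has flat normal bundle there (Remark). On the complementary set $A_{1}$ is umbilic, and since $\mathrm{tr}\,A_{1}=2\left\Vert \overrightarrow{H}\right\Vert $ this forces $A_{1}=\left\Vert \overrightarrow{H}\right\Vert \,\mathrm{Id}$, i.e. $c_{1}^{ik}=\left\Vert \overrightarrow{H}\right\Vert \,\delta^{ik}$. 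There I would test the candidate center $c=x+\frac{1}{\left\Vert \overrightarrow{H}\right\Vert }N_{1}$: using Weingarten (\ref{2.6}) with $T_{i}^{1\beta}=0$ gives $c_{u^{i}}=x_{u^{i}}-\frac{1}{\left\Vert \overrightarrow{H}\right\Vert }\sum_{k}c_{1}^{ik}x_{u^{k}}=0$, so $c$ is a fixed point and $\left\Vert x-c\right\Vert =1/\left\Vert \overrightarrow{H}\right\Vert $; thus $M$ lies in the hypersphere of radius $1/\left\Vert \overrightarrow{H}\right\Vert $ centered at $c$. Since $\overrightarrow{H}$ is then a constant multiple of the radial vector $x-c$, the mean curvature vector is purely normal to that hypersphere, which is precisely the statement that $M$ is minimal in it.

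The remaining, and main, obstacle is to upgrade this pointwise dichotomy to the clean global alternative on a connected $M$, since a priori $\Omega$ and the umbilic set could both be nonempty. I would remove this difficulty using the real-analyticity of surfaces with parallel mean curvature vector: $K_{N}$ is then a real-analytic function, so if $\Omega\neq\varnothing$ its vanishing on the nonempty open set $\Omega$ forces $K_{N}\equiv 0$ on all of $M$, giving flat normal bundle; and if $\Omega=\varnothing$ then $A_{1}=\left\Vert \overrightarrow{H}\right\Vert \,\mathrm{Id}$ everywhere and the sphere argument applies globally. Checking that the two strands of the bracket computation -- the vanishing of the ambient curvature term in Ricci and the identification of (\ref{2.15*}) with $[A_{1},A_{\beta}]$ -- are set up with consistent sign conventions is the one place I would be most careful.
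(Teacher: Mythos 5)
There is nothing in the paper to compare your argument against: the theorem is quoted from Chen \cite{Ch} without proof, so your proposal has to stand on its own. Its local part is correct and is indeed the standard argument. Rotating the frame so that $N_{1}=\overrightarrow{H}/\Vert \overrightarrow{H}\Vert$ and feeding $H_{1}=\mathrm{const}$, $H_{\alpha }=0$ $(\alpha \geq 2)$ into (\ref{4.2}) does give $T_{i}^{1\beta }=0$; the identity (\ref{2.15*}) then kills all $S_{ij}^{1\beta }$ and hence all brackets $[A_{1},A_{\beta }]$; the pointwise dichotomy (distinct eigenvalues of $A_{1}$ versus $A_{1}=\Vert \overrightarrow{H}\Vert \,\mathrm{Id}$) and the fixed--center computation $c=x+N_{1}/\Vert \overrightarrow{H}\Vert$ are sound, and you correctly defer the latter to the case $\Omega =\varnothing$, since differentiating requires an open set. (Incidentally, in the paper's own setting $n=2$ the statement is nearly vacuous: once $N_{1}$ is parallel, $\nabla ^{\perp }N_{2}$ is orthogonal to both $N_{1}$ and $N_{2}$, so $N_{2}$ is parallel as well and the normal bundle is flat outright; the substance of the theorem is codimension $\geq 3$.)

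The genuine gap is the globalization. Everything between the pointwise dichotomy and the stated alternative rests on your sentence that surfaces with parallel mean curvature vector are real-analytic, which you neither prove nor can safely cite. This is not CMC elliptic regularity in disguise: in higher codimension, parallelity of $\overrightarrow{H}$ is an overdetermined third-order condition on the immersion, and extracting from it a determined elliptic system to which Morrey-type analyticity applies is real work your proposal does not indicate; worse, in the literature the analyticity of such surfaces is usually deduced \emph{from} this very classification (they turn out to be minimal in a sphere or CMC in $\mathbb{E}^{3}$ or $S^{3}$), so invoking it risks circularity. The classical device that closes exactly this gap --- and is how Chen and Yau argue --- is a holomorphic quadratic differential: because $\overrightarrow{H}$ is parallel, the Codazzi equation in the flat ambient space shows that $Q=\left\langle \sigma (\partial _{z},\partial _{z}),\overrightarrow{H}\right\rangle dz^{2}$, written in a local conformal parameter $z$, is holomorphic, and its zero set is precisely the set where $A_{1}$ is umbilic. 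Hence on a connected surface either $Q\equiv 0$, so $\Omega =\varnothing$ and your sphere argument applies everywhere, or the umbilic points of $A_{1}$ are isolated, and then the normal curvature tensor vanishes off a discrete set, hence everywhere by continuity. This replaces the unproven analyticity input by an elementary computation. One smaller point: for $n>2$, flatness of the normal bundle means that all components $S_{12}^{\alpha \beta }$ vanish, not only $K_{N}$ (which the paper defines only for $n=2$), so whatever continuation argument you use must be run on the full tensor $R^{\perp }$, not on the single scalar $K_{N}$.
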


\section{\textbf{Normal Transport Surfaces in }$\mathbb{E}^{4}$}

Let $M$ and $\widetilde{M}$ be two smooth surfaces in Euclidean $4$-space $%
\mathbb{E}^{4}$ and let $\varphi :M\rightarrow \widetilde{M}$ be a
diffeomorphism. Then the surface $\widetilde{M}$ enveloping family of normal
$2$-planes to $M$ is the normal transport of $M$ in $\mathbb{E}^{4}$ \cite%
{Fr}. Furthermore, let $\overrightarrow{x}$ be a position (radius) vector of
$p\in M,$ and $\widetilde{x}$ be the position (radius) vector of the point $%
\varphi (p)\in \widetilde{M}.$ Then the mapping $\varphi :M\rightarrow
\widetilde{M}$ has the form%
\begin{equation*}
\widetilde{x}=x+\overrightarrow{w},~~\overrightarrow{w}\in T_{p}^{\perp }M.
\end{equation*}%
where, $\overrightarrow{p\varphi (p)}=\overrightarrow{w}(p),~\overrightarrow{%
w}(p)\in T_{p}^{\perp }M$ is the normal vector to $M$. For the case%
\begin{equation*}
\overrightarrow{w}(p)=\sum \limits_{i=1}^{2}f_{i}(u,v)N_{i}(u,v),
\end{equation*}%
the normal transport surface $\widetilde{M}$ of $M$ given by
\begin{equation}
\widetilde{M}:\widetilde{x}(u,v)=x(u,v)+\sum%
\limits_{i=1}^{2}f_{i}(u,v)N_{i}(u,v),  \label{5.1}
\end{equation}%
where $f_{i}$ \ $(i=1,2)$ are offset functions and $N_{1},N_{2}\in
T_{p}^{\perp }M$ \cite{Fr}.

The tangent space to $\widetilde{M}$ at an arbitrary point $p=\widetilde{x}%
(u,v)$ of $\widetilde{M}$ is spanned by%
\begin{equation}
\begin{array}{l}
\vspace{2mm}\widetilde{x}_{u}=x_{u}+f_{1}\left( N_{1}\right)
_{u}+f_{2}\left( N_{2}\right) _{u}+(f_{1})_{u}N_{1}+(f_{2})_{u}N_{2}, \\
\widetilde{x}_{v}=x_{v}+f_{1}\left( N_{1}\right) _{v}+f_{2}\left(
N_{2}\right) _{v}+(f_{1})_{v}N_{1}+(f_{2})_{v}N_{2}.%
\end{array}
\label{5.2}
\end{equation}%
Further, using the Weingarten equation (\ref{2.6}) we get%
\begin{equation}
\begin{array}{l}
\left( N_{1}\right) _{u}=-\left( c_{1}^{11}x_{u}+c_{1}^{12}x_{v}\right)
+T_{1}^{12}N_{2} \\
\left( N_{2}\right) _{u}=-\left( c_{2}^{11}x_{u}+c_{2}^{12}x_{v}\right)
-T_{1}^{12}N_{1} \\
\left( N_{1}\right) _{v}=-\left( c_{1}^{21}x_{u}+c_{1}^{22}x_{v}\right)
+T_{2}^{12}N_{2} \\
\left( N_{2}\right) _{v}=-\left( c_{2}^{21}x_{u}+c_{2}^{22}x_{v}\right)
-T_{2}^{12}N_{2}.%
\end{array}
\label{5.3}
\end{equation}%
So, substituting (\ref{5.3}) into (\ref{5.2}) we get%
\begin{equation}
\begin{array}{l}
\widetilde{x}_{u}=\left( 1-f_{1}c_{1}^{11}-f_{2}c_{2}^{11}\right)
x_{u}-\left( f_{1}c_{1}^{12}+f_{2}c_{2}^{12}\right) x_{v} \\
\text{ \ \ \ \ \ }+\left( (f_{1})_{u}-f_{2}T_{1}^{12}\right) N_{1}+\left(
(f_{2})_{u}+f_{1}T_{1}^{12}\right) N_{2},%
\end{array}
\label{5.4}
\end{equation}%
\begin{equation}
\begin{array}{l}
\widetilde{x}_{v}=-\left( f_{1}c_{1}^{21}+f_{2}c_{2}^{21}\right)
x_{u}+\left( 1-f_{1}c_{1}^{22}-f_{2}c_{2}^{22}\right) x_{v} \\
\text{ \ \ \ \ \ }+\left( (f_{1})_{v}-f_{2}T_{2}^{12}\right) N_{1}+\left(
(f_{2})_{v}+f_{1}T_{2}^{12}\right) N_{2}.%
\end{array}
\label{5.4*}
\end{equation}

The normal transport surfaces in $3$-dimensional Euclidean space $\mathbb{E}%
^{3}$ have the parametrization
\begin{equation*}
\widetilde{M}:\widetilde{x}(u,v)=x(u,v)+F(u,v)~N(u,v),
\end{equation*}%
where $N(u,v)\in T_{p}^{\perp }M$ and $F$ is a real valued function in the
parameter $(u,v)$. In fact, these surfaces are known as surface offsets in $%
\mathbb{E}^{3}~$and $F$ is its offset function \cite{H}.

If the offset function depends on the principal curvatures $k_{1}$ and $%
k_{2} $ of $M$ then one can choose the variable offset function as;

\begin{enumerate}
\item $F=k_{1}k_{2},~$Gaussian curvature,

\item $F=\frac{1}{2}(k_{1}+k_{2}),$ mean curvature,

\item $F=k_{1}^{2}+k_{2}^{2},$ energy functional,

\item $F=\left \vert k_{1}\right \vert +\left \vert k_{2}\right \vert ,$
absolute functional,

\item $F=k_{i},1\leq i\leq 2,$ principal curvature,

\item $F=\frac{1}{k_{i}},$ focal points,

\item $F=const.,$ parallel surface.
\end{enumerate}

The different offset functions listed above can now be used the interrogate
and visualize of the surfaces (see \cite{OA}). \ Using different offset
functions, one can construct a one-parameter family of various normal
transport surfaces from a given surface of $4$-dimensional Euclidean space $%
\mathbb{E}^{4}$.

In the following definition we construct some special normal transport
surfaces in $\mathbb{E}^{4}$ which are the generalization of some
generalized focal surfaces give before$.$

\begin{definition}
i) The normal transport surface $\widetilde{M}_{H}$ given with the
parametrization
\begin{equation}
\widetilde{M}_{H}:\widetilde{x}%
(u,v)=x(u,v)+H_{1}(u,v)~N_{1}(u,v)+H_{2}(u,v)~N_{2}(u,v),  \label{5.8}
\end{equation}%
is called normal transport surface of $H$-type, where $f_{\alpha
}(u,v)=H_{\alpha }$ $(\alpha =1,2)$ are the offset functions.

ii) The normal transport surface $\widetilde{M}_{K}$ given with the
parametrization
\begin{equation}
\widetilde{M}_{K}:\widetilde{x}%
(u,v)=x(u,v)+K_{1}(u,v)~N_{1}(u,v)+K_{2}(u,v)~N_{2}(u,v),  \label{5.9}
\end{equation}%
is called normal transport surface of $K$-type, where $f_{\alpha
}(u,v)=K_{\alpha }$ $(\alpha =1,2)$ are the offset functions.
\end{definition}

\subsection{Parallel Surfaces in $\mathbb{E}^{4}$}

\begin{definition}
The normal transport surface $\widetilde{M}$ of $M$ is called parallel
surface of $M$ in $\mathbb{E}^{4}$ if the equality
\begin{equation}
\left \langle \widetilde{x}_{u_{i}},N_{\alpha }\right \rangle =0,\text{ }%
1\leq i,\alpha \leq 2,  \label{5.5}
\end{equation}%
holds for all $N_{\alpha }\in T_{p}^{\perp }M$ \cite{Fr}.
\end{definition}

If the functions $f_{1}$ and $f_{2}$ are constant then it is easy to see
that $\widetilde{M}$ is a parallel surface of $M$ and vice versa, at least
if the surfaces are immersed in $\mathbb{E}^{3}.$ The parallelity of $%
\widetilde{M}$ in $\mathbb{E}^{4}$ depends on the normal curvature $K_{N}$
of $M$ \cite{Fr}. Parallel type surface are widely used in geometry and
mathematical physics. We want to refer the reader to da Costa \cite{Co} for
an application in quantum mechanics in curved spaces.

Let $\widetilde{M}$ be a parallel surface of $M$ in $\mathbb{E}^{4}$. Then
by use of (\ref{5.4}) and (\ref{5.4*}) with (\ref{5.5}) one can get%
\begin{eqnarray}
0 &=&\left \langle \widetilde{x}_{u},N_{1}\right \rangle
=(f_{1})_{u}-f_{2}T_{1}^{12},  \notag \\
0 &=&\left \langle \widetilde{x}_{v},N_{1}\right \rangle
=(f_{1})_{v}-f_{2}T_{2}^{12},  \label{5.6} \\
0 &=&\left \langle \widetilde{x}_{u},N_{2}\right \rangle
=(f_{2})_{u}+f_{1}T_{1}^{12},  \notag \\
0 &=&\left \langle \widetilde{x}_{v},N_{2}\right \rangle
=(f_{2})_{v}+f_{1}T_{2}^{12}.  \notag
\end{eqnarray}%
Differentiating the first two equations and making use of the other
equations shows us
\begin{eqnarray}
(f_{1})_{uv}+f_{1}T_{2}^{12}T_{1}^{12}-f_{2}\left( T_{1}^{12}\right) _{v}
&=&0,  \label{5.7} \\
(f_{1})_{vu}+f_{1}T_{1}^{12}T_{2}^{12}-f_{2}\left( T_{2}^{12}\right) _{u}
&=&0.  \notag
\end{eqnarray}%
Thus a computation of the left hand sides of (\ref{5.7}) brings%
\begin{equation*}
-f_{2}\left \{ \left( T_{1}^{12}\right) _{v}-\left( T_{2}^{12}\right)
_{u}\right \} =0.
\end{equation*}%
So, by the use of (\ref{2.18}) we can conclude that the normal curvature $%
K_{N}$ of $M$ vanishes identically \cite{Fr}. Consequently, we obtain the
following result of S. Fr\"{o}hlich.

\begin{theorem}
\cite{Fr} The normal transport surface $\widetilde{M}$ of $M$ is parallel if
and only if $M$ has flat normal bundle.
\end{theorem}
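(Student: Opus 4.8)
The plan is to reduce the parallelity condition (\ref{5.5}) to a first-order PDE system for the offset functions and then extract an integrability condition that forces the normal curvature to vanish. First I would substitute the expressions (\ref{5.4}) and (\ref{5.4*}) for $\widetilde{x}_u$ and $\widetilde{x}_v$ into the four scalar equations $\langle \widetilde{x}_{u_i},N_\alpha\rangle=0$. Since $x_u,x_v$ are tangent and $N_1,N_2$ are normal, only the normal components contribute, and this yields exactly the system (\ref{5.6}). It is worth observing at this stage that (\ref{5.6}) says precisely that the offset field $\overrightarrow{w}=f_1N_1+f_2N_2$ is parallel with respect to the normal connection $\nabla^{\perp}$, the torsion coefficients $T_i^{12}$ being the connection coefficients of $\nabla^{\perp}$ in the frame $N_1,N_2$.

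For the forward implication, assume $\widetilde{M}$ is parallel, so (\ref{5.6}) holds. I would differentiate the first equation $(f_1)_u=f_2T_1^{12}$ with respect to $v$ and the second equation $(f_1)_v=f_2T_2^{12}$ with respect to $u$, then eliminate $(f_2)_u$ and $(f_2)_v$ using the remaining two equations of (\ref{5.6}). This produces the pair (\ref{5.7}). Subtracting them and invoking equality of mixed partials $(f_1)_{uv}=(f_1)_{vu}$ cancels the second-order terms together with the products $f_1T_1^{12}T_2^{12}$, leaving
\[
f_2\left\{(T_1^{12})_v-(T_2^{12})_u\right\}=0,
\]
which by (\ref{2.18}) reads $f_2\sqrt{g}\,K_N=0$. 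Performing the symmetric computation on the two equations governing $f_2$ gives $f_1\sqrt{g}\,K_N=0$.

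The main obstacle is concluding $K_N\equiv 0$ from these two relations, since a priori $K_N$ could be nonzero on a region where one of the offset functions happens to vanish. I would resolve this by noting that a genuine normal transport surface has $\overrightarrow{w}\neq 0$ (otherwise $\widetilde{M}=M$), so on the open set where $\overrightarrow{w}$ does not vanish at least one of $f_1,f_2$ is nonzero; there $K_N=0$, and by continuity $K_N$ vanishes identically. By the Remark following the definition of flat normal bundle, $K_N\equiv 0$ is equivalent to $M$ having flat normal bundle.

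For the converse, assume $M$ has flat normal bundle. Then the orthonormal normal frame $N_1,N_2$ may be taken torsion-free, so $T_1^{12}=T_2^{12}=0$, and the system (\ref{5.6}) collapses to $(f_1)_u=(f_1)_v=(f_2)_u=(f_2)_v=0$. Hence constant offset functions $f_1,f_2$ satisfy (\ref{5.5}), so the associated normal transport surface is parallel, which completes the equivalence. Conceptually, the statement is the rank-two instance of the fact that an $SO(2)$ normal connection admits a nonzero parallel section if and only if it is flat, since a single parallel normal field can be rotated by ninety degrees to produce a parallel normal frame.
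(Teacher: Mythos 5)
Your proposal is correct, and its computational core is identical to the paper's: substitute (\ref{5.4}) and (\ref{5.4*}) into (\ref{5.5}) to obtain the system (\ref{5.6}), cross-differentiate the two equations governing $f_1$, and use equality of mixed partials to get $f_2\left\{ \left( T_1^{12}\right)_v-\left( T_2^{12}\right)_u\right\} =0$, i.e. $f_2\sqrt{g}\,K_N=0$ by (\ref{2.18}). The differences are all in the direction of completeness. The paper stops after this single computation and simply declares $K_N\equiv 0$, in effect proving only the implication parallel $\Rightarrow$ flat normal bundle; you additionally derive the symmetric relation $f_1\sqrt{g}\,K_N=0$, you confront the genuine issue that these relations force $K_N$ to vanish only where $(f_1,f_2)\neq (0,0)$, and you supply the converse (in a torsion-free frame (\ref{5.6}) forces the $f_i$ to be constants, and constant offsets satisfy (\ref{5.5})), correctly reading that direction as the existence of a parallel normal transport surface rather than parallelity of an arbitrary one. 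One step of yours deserves tightening: continuity only yields $K_N=0$ on the closure of the set where $\overrightarrow{w}\neq 0$, which settles nothing if $\overrightarrow{w}$ happened to vanish on an open patch. The clean fix is already inside your system: (\ref{5.6}) gives $f_1(f_1)_{u^i}+f_2(f_2)_{u^i}=0$, so $f_1^2+f_2^2$ is constant (precisely the corollary the paper records after this theorem); hence $\overrightarrow{w}$ either vanishes identically (the trivial case $\widetilde{M}=M$) or vanishes nowhere, and $K_N\equiv 0$ then holds everywhere without any limiting argument. Finally, your closing remark --- that (\ref{5.6}) expresses parallelism of $\overrightarrow{w}$ with respect to the normal connection, so the theorem is the rank-two statement that an $SO(2)$-connection is flat if and only if it admits a nonzero parallel section --- is the right conceptual account of why the computation works, and it does not appear in the paper.
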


We obtain the following result.

\begin{corollary}
The normal transport surface $\widetilde{M}$ of $M$ is parallel if
and only if the squared sum of the offset functions is constant,
i.e., $\sum \limits_{i=1}^{2}f_{i}^{2}(u,v)=const.$
\end{corollary}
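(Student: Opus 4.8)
The plan is to derive the constancy of $\sum_{i=1}^{2} f_i^2$ directly from the defining parallelity equations \eqref{5.6}, without appealing to the vanishing of $K_N$ at all. The key observation is that the four scalar equations in \eqref{5.6} express exactly that the tangential derivatives of the offset functions are governed by the torsion coefficients in a skew-symmetric fashion. So I would start by writing out, for each $i=1,2$, the derivative of the squared sum along the coordinate direction $u^i$.

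First I would compute $\tfrac{\partial}{\partial u}\bigl(f_1^2+f_2^2\bigr) = 2f_1(f_1)_u + 2f_2(f_2)_u$ and substitute the expressions for $(f_1)_u$ and $(f_2)_u$ coming from the first and third equations of \eqref{5.6}, namely $(f_1)_u = f_2 T_1^{12}$ and $(f_2)_u = -f_1 T_1^{12}$. This gives
\begin{equation*}
\tfrac{\partial}{\partial u}\bigl(f_1^2+f_2^2\bigr) = 2f_1 f_2 T_1^{12} - 2f_2 f_1 T_1^{12} = 0.
\end{equation*}
The cross terms cancel precisely because of the opposite signs carried by the torsion coefficient in the two Weingarten-type relations. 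An identical computation in the $v$-direction, using the second and fourth equations $(f_1)_v = f_2 T_2^{12}$ and $(f_2)_v = -f_1 T_2^{12}$, yields $\tfrac{\partial}{\partial v}\bigl(f_1^2+f_2^2\bigr)=0$.

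Since both first-order partial derivatives of $f_1^2+f_2^2$ vanish on the connected parameter domain $D$, the function $\sum_{i=1}^{2} f_i^2$ is constant. This establishes the forward direction: if $\widetilde{M}$ is parallel then \eqref{5.6} holds, hence the squared sum is constant. For the converse I would proceed by contrast: assume the squared sum is constant and invoke Theorem~\ref{5.5}'s characterization together with the already-derived identities. The cleanest route is to note that constancy of $f_1^2+f_2^2$ together with the structure of the normal connection recovers the parallelity conditions; alternatively one combines this corollary with the preceding theorem, since a parallel offset vector in a flat normal bundle has constant length and conversely.

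The main obstacle I anticipate is the converse direction, since constancy of $f_1^2+f_2^2$ by itself is a single scalar constraint whereas parallelity \eqref{5.6} is four scalar equations; constancy of the norm does not immediately force the individual relations $(f_1)_u = f_2 T_1^{12}$ etc. To close this gap I would either restrict to the generic nonvanishing case and argue that the normal vector $\overrightarrow{w}=f_1 N_1 + f_2 N_2$ must then be parallel in the normal bundle (mirroring the argument of Proposition following \eqref{4.2}), or simply observe that in the intended setting the offset vector is taken parallel in the normal bundle, so that the corollary is really the specialization of Proposition~2 to the offset vector $\overrightarrow{w}$ in place of $\overrightarrow{H}$, and the constancy of its squared length is then equivalent to its being parallel. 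Making the precise hypotheses of the converse explicit is the delicate point.
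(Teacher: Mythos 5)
Your forward direction coincides exactly with the paper's own proof: from (\ref{5.6}) the paper derives the two identities $(f_{1})_{u}f_{1}+(f_{2})_{u}f_{2}=0$ and $(f_{1})_{v}f_{1}+(f_{2})_{v}f_{2}=0$ (its (\ref{5.7.*})), which is precisely your cancellation of the torsion terms, and from these concludes that $f_{1}^{2}+f_{2}^{2}$ is constant.

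The gap you flag in the converse is genuine, and you should know that the paper does not close it either: its entire proof is the forward computation followed by the words ``which completes the proof.'' Your diagnosis is the correct one, and it can be sharpened into a counterexample rather than merely a missing step. Constancy of $f_{1}^{2}+f_{2}^{2}$ says only that $\left( (f_{1})_{u^{i}},(f_{2})_{u^{i}}\right) $ is orthogonal to $(f_{1},f_{2})$, i.e. $(f_{1})_{u^{i}}=-\lambda _{i}f_{2}$, $(f_{2})_{u^{i}}=\lambda _{i}f_{1}$ for arbitrary functions $\lambda _{i}$, whereas parallelity (\ref{5.6}) demands the specific choice $\lambda _{i}=-T_{i}^{12}$. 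Concretely, with a torsion-free frame ($T_{1}^{12}=T_{2}^{12}=0$), the equations (\ref{5.6}) force $f_{1}$ and $f_{2}$ to be individually constant, while $f_{1}=\cos \theta (u,v)$, $f_{2}=\sin \theta (u,v)$ with nonconstant $\theta $ has constant squared sum and violates (\ref{5.6}). Neither of your proposed repairs avoids this: in particular, reducing to the Proposition on $\overrightarrow{H}$ inherits the same flaw, because ``constant norm implies parallel'' is equally false for normal vector fields, and that Proposition's claim that its converse is ``trivial'' is itself unjustified. So your argument establishes exactly what the paper's does --- the implication parallel $\Rightarrow $ constant squared sum --- and the reverse implication should be regarded as an error in the statement (or as requiring an unstated additional hypothesis), not as something your proof failed to reach.
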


\begin{proof}
From the expressions in (\ref{5.6}) we get%
\begin{equation}
\begin{array}{l}
(f_{1})_{u}f_{1}+(f_{2})_{u}f_{2}=0, \\
(f_{1})_{v}f_{1}+(f_{2})_{v}f_{2}=0.%
\end{array}
\label{5.7.*}
\end{equation}%
which completes the proof.
\end{proof}

We give the following example.

\begin{example}
The normal transport surface $\widetilde{M}$ of $M$ is given with the patch%
\begin{equation*}
\widetilde{X}(u,v)=X(u,v)+r\cos u~N_{1}(u,v)+r\sin u~N_{2}(u,v),
\end{equation*}%
is a parallel surface of $M$ in $\mathbb{E}^{4}.$
\end{example}

Let $M$ be a non-minimal local surface in $\mathbb{E}^{4}$ and $\widetilde{M}%
_{H}$ its normal transport surface. If $\widetilde{M}_{H}$ is a parallel
surface of $M$ in $\mathbb{E}^{4}$ then by Theorem 3 $M$ has vanishing
normal curvature. Furthermore, by the use of (\ref{5.7.*}) we get
\begin{eqnarray*}
\left( H_{1}\right) _{u}H_{1}+\left( H_{2}\right) _{u}H_{2} &=&0, \\
\left( H_{1}\right) _{v}H_{1}+\left( H_{2}\right) _{v}H_{2} &=&0.
\end{eqnarray*}%
Thus, $\left \Vert \overrightarrow{H}\right \Vert ^{2}=\sum
\limits_{\alpha =1}^{2}H_{\alpha }^{2}$ is a constant function. So,
we conclude that the mean curvature vector $\overrightarrow{H}$ of
$M$ is parallel in the normal bundle. Thus, we have proved the
following result.

\begin{theorem}
Let $M$ be a non-minimal local surface in $\mathbb{E}^{4}$. Then the normal
transport surface $\widetilde{M}_{H}$ of $M$ in $\mathbb{E}^{4}$ is parallel
if and only if the mean curvature vector $\overrightarrow{H}$ of $M$ is
parallel in the normal bundle.
\end{theorem}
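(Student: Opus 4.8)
The statement is a biconditional relating two properties of a non-minimal surface $M$: the parallelity of the $H$-type normal transport surface $\widetilde{M}_{H}$, and the parallelity of the mean curvature vector $\overrightarrow{H}$ in the normal bundle. My plan is to prove the two directions by chaining together results already established in the excerpt, using the specific offset functions $f_{\alpha}=H_{\alpha}$ as the bridge. The key observation is that $\widetilde{M}_{H}$ is simply the normal transport surface of Definition with the particular choice $f_{1}=H_{1}$, $f_{2}=H_{2}$, so every general fact about parallel normal transport surfaces applies verbatim after this substitution.

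First I would prove the forward direction. Assume $\widetilde{M}_{H}$ is parallel. By Theorem 3, $M$ has flat normal bundle, so $K_{N}\equiv 0$. More usefully, by Corollary the squared sum of the offset functions is constant; with $f_{\alpha}=H_{\alpha}$ this reads $H_{1}^{2}+H_{2}^{2}=\text{const}$, i.e. $\left\Vert\overrightarrow{H}\right\Vert^{2}$ is constant. By Proposition 1 (applied with $n=2$), the constancy of $\left\Vert\overrightarrow{H}\right\Vert^{2}$ is equivalent to $\overrightarrow{H}$ being parallel in the normal bundle. This is exactly the computation carried out in the paragraph immediately preceding the theorem, so this direction is essentially complete once I cite Corollary and Proposition 1 in sequence.

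For the converse, I would assume $\overrightarrow{H}$ is parallel in the normal bundle and aim to verify the defining condition (\ref{5.5}) for $\widetilde{M}_{H}$. By the defining equation (\ref{4.2}) of parallelism, $(H_{\alpha})_{u^{i}}=\sum_{\beta}H_{\beta}T_{i}^{\alpha\beta}$. Specializing to $n=2$, where the only nonzero torsion coefficient is $T_{i}^{12}=-T_{i}^{21}$, this gives precisely $(H_{1})_{u}=H_{2}T_{1}^{12}$, $(H_{2})_{u}=-H_{1}T_{1}^{12}$, and the analogous $v$-derivatives. Substituting $f_{\alpha}=H_{\alpha}$ into the four inner products computed in (\ref{5.6}), each right-hand side such as $(f_{1})_{u}-f_{2}T_{1}^{12}=(H_{1})_{u}-H_{2}T_{1}^{12}$ vanishes by the parallelism relations. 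Hence $\langle\widetilde{x}_{u^{i}},N_{\alpha}\rangle=0$ for all $i,\alpha$, which is exactly condition (\ref{5.5}), so $\widetilde{M}_{H}$ is parallel.

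The main subtlety is not computational but logical: the non-minimality hypothesis must be used correctly. I would note that the $H$-type surface is only a well-defined normal transport surface distinct from $M$ when $\overrightarrow{H}\neq 0$, and the equivalence with Proposition 1 implicitly assumes some $H_{\alpha}\neq 0$; the non-minimality of $M$ guarantees $\overrightarrow{H}\neq 0$ somewhere, making both the application of Proposition 1 and the interpretation of $\widetilde{M}_{H}$ legitimate. Thus the expected obstacle is merely ensuring that the reliance on Corollary and Proposition 1 is valid under the stated hypothesis; the algebra in (\ref{5.6}) and (\ref{4.2}) is routine once the correct specialization $f_{\alpha}=H_{\alpha}$ is made.
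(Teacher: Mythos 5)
Your proposal is correct, and your forward direction is exactly the paper's own argument: parallelity of $\widetilde{M}_{H}$, then Theorem 3, then the relations (\ref{5.7.*}) with $f_{\alpha }=H_{\alpha }$ giving $\left\Vert \overrightarrow{H}\right\Vert ^{2}=const.$, then Proposition 1. Where you genuinely differ is the converse, which the paper never writes out: it implicitly relies on reversing the chain, i.e.\ on the converse of the Corollary (constancy of $f_{1}^{2}+f_{2}^{2}$ implying parallelity of $\widetilde{M}$) and on the converse of Proposition 1, both of which are asserted rather than proved in the paper (constancy of $f_{1}^{2}+f_{2}^{2}$ alone does not recover the four equations (\ref{5.6})). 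You instead verify the converse directly: specializing (\ref{4.2}) to $n=2$ yields $(H_{1})_{u}=H_{2}T_{1}^{12}$, $(H_{2})_{u}=-H_{1}T_{1}^{12}$ and the corresponding $v$-equations, which are precisely the vanishing of the normal components of $\widetilde{x}_{u},\widetilde{x}_{v}$ read off from (\ref{5.4}) and (\ref{5.4*}), i.e.\ condition (\ref{5.5}) with $f_{\alpha }=H_{\alpha }$. This is the stronger route: it is airtight, and in fact it works in both directions, since (\ref{5.5}) for $\widetilde{M}_{H}$ is literally the system (\ref{4.2}) for $n=2$. Had you used this observation for the forward direction as well, you could have dispensed with the Corollary and Proposition 1 entirely, thereby also avoiding the logically weak step (constant $\left\Vert \overrightarrow{H}\right\Vert ^{2}$ implies parallel $\overrightarrow{H}$) that your forward direction, like the paper's, still leans on; your remark about non-minimality legitimizing Proposition 1 is the right instinct, but the cleaner fix is to bypass that proposition altogether.
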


Let $M$ be a non-flat local surface in $\mathbb{E}^{4}$ and $\widetilde{M}%
_{K}$ its normal transport surface. If $\widetilde{M}_{K}$ is a parallel
surface of $M$ in $\mathbb{E}^{4}$ then by Theorem 3 $\widetilde{M}_{K}$ has
vanishing normal curvature. Furthermore, by the use of (\ref{5.7.*}) we get
\begin{eqnarray*}
\left( K_{1}\right) _{u}K_{1}+\left( K_{2}\right) _{u}K_{2} &=&0, \\
\left( K_{1}\right) _{v}K_{1}+\left( K_{2}\right) _{v}K_{2} &=&0.
\end{eqnarray*}%
Thus, we conclude that $K=\sum \limits_{\alpha =1}^{2}K_{\alpha
}^{2}$ is a constant function, i.e., $M$ has constant Gauss
curvature. Thus, we have proved the following result.

\begin{theorem}
Let $M$ be a non-flat local surface in $\mathbb{E}^{4}$. Then the normal
transport surface $\widetilde{M}_{K}$ of $M$ in $\mathbb{E}^{4}$ is parallel
if and only if the Gaussian curvature of $M$ is a non-zero constant.
\end{theorem}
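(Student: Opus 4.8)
My plan is to follow the same template as the preceding $H$-type result, replacing the mean curvatures $H_\alpha$ by the Gaussian curvatures $K_\alpha$. Concretely, I would (i) use Fr\"{o}hlich's theorem that a normal transport surface is parallel precisely when $M$ has flat normal bundle, and (ii) feed the offset functions $f_\alpha = K_\alpha$ into the scalar consequence (\ref{5.7.*}) of the parallelity system (\ref{5.6}). Throughout I would keep in mind that, by (\ref{2.10}), the Gaussian curvature of $M$ is the sum $K = K_1 + K_2$, a quantity that is intrinsic and hence independent of the normal frame $\{N_1,N_2\}$, whereas the individual $K_\alpha$ (and therefore the surface $\widetilde{M}_K$ itself) are frame-dependent.

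For the forward implication I would assume $\widetilde{M}_K$ is parallel. Fr\"{o}hlich's theorem then yields that $M$ has flat normal bundle, i.e. $K_N \equiv 0$, which by (\ref{2.18}) amounts to $(T_1^{12})_v = (T_2^{12})_u$. Specialising (\ref{5.7.*}) to $f_1 = K_1$, $f_2 = K_2$ gives
\[
(K_1)_u K_1 + (K_2)_u K_2 = 0, \qquad (K_1)_v K_1 + (K_2)_v K_2 = 0,
\]
which is exactly $\tfrac{1}{2}\,\partial_u(K_1^2 + K_2^2) = 0$ and $\tfrac{1}{2}\,\partial_v(K_1^2 + K_2^2) = 0$, so $K_1^2 + K_2^2$ is constant. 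For the converse I would run this backwards through the Corollary: constancy of the squared sum of the offset functions returns the conditions (\ref{5.5}), hence parallelity of $\widetilde{M}_K$. The non-flatness hypothesis on $M$ enters to guarantee that the relevant constant is non-zero.

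The step I expect to be the main obstacle — and the one I would spend the most care on — is bridging the scalar produced by the machinery, namely $K_1^2 + K_2^2 = const$, with the intrinsic quantity named in the statement, namely $K = K_1 + K_2$. These two are not equivalent for an arbitrary normal frame, so extra input is needed to pass from constancy of the squared sum to constancy of $K$ itself. The natural device is the flatness of the normal bundle established in the first step: it permits rotating $\{N_1,N_2\}$ into a parallel (torsion-free) frame, in which the torsion coefficients $T_i^{12}$ vanish and the full system (\ref{5.6}) forces the offset functions to be \emph{individually} constant, so that $K = K_1 + K_2$ is read off directly. I would therefore organise the write-up so that the reduction to a parallel frame precedes any use of (\ref{5.7.*}), and I would verify carefully that this reduction is compatible with the definition of $\widetilde{M}_K$ as a $K$-type transport surface, since that compatibility is precisely what legitimises the intrinsic conclusion and is the most delicate point of the argument.
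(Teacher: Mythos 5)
Your forward-direction skeleton is the same as the paper's: invoke Fr\"{o}hlich's theorem (parallel $\Leftrightarrow$ flat normal bundle) and then feed $f_{\alpha}=K_{\alpha}$ into (\ref{5.7.*}) to get $K_{1}^{2}+K_{2}^{2}=const$. Up to that point you match the paper exactly, and you have in fact spotted the weak point that the paper simply elides: its proof ends by declaring ``$K=\sum_{\alpha =1}^{2}K_{\alpha }^{2}$ is a constant function, i.e., $M$ has constant Gauss curvature,'' silently identifying the squared sum with the Gaussian curvature, which by (\ref{2.10}) is $K=K_{1}+K_{2}$. So the obstacle you single out is genuine, and it is left unresolved in the paper itself.

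However, your proposed repair does not close the gap; it fails at exactly the compatibility point you promise to ``verify carefully.'' Under a rotation of the normal frame, $N_{1}^{\prime }=\cos \theta \,N_{1}+\sin \theta \,N_{2}$, $N_{2}^{\prime }=-\sin \theta \,N_{1}+\cos \theta \,N_{2}$, the coefficients $c_{ij}^{\alpha }$ transform linearly, so the curvatures $K_{\alpha }$, being quadratic in them, transform quadratically:
\begin{equation*}
K_{1}^{\prime }=\cos ^{2}\theta \,K_{1}+\sin ^{2}\theta \,K_{2}+\cos \theta
\sin \theta \,\frac{c_{11}^{1}c_{22}^{2}+c_{11}^{2}c_{22}^{1}-2c_{12}^{1}c_{12}^{2}}{g},
\end{equation*}
whereas the components of a fixed normal vector field transform linearly. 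Hence $K_{1}N_{1}+K_{2}N_{2}$ is not a frame-invariant vector field: the $K$-type transport surface built from the torsion-free frame is a \emph{different surface} from the one hypothesized to be parallel, so the parallelity hypothesis and the system (\ref{5.6}) do not carry over to the rotated frame. What rewriting the \emph{same} surface $\widetilde{M}_{K}$ in a parallel frame legitimately yields is that its offset components in that frame are individually constant, i.e., that $K_{1}N_{1}+K_{2}N_{2}$ is a parallel normal field of constant length --- which is precisely the information $K_{1}^{2}+K_{2}^{2}=const$ over again, not constancy of $K_{1}+K_{2}$. (There is also a circularity in letting the reduction ``precede'' everything: a torsion-free frame exists only once flat normal bundle is known, which you obtain from parallelity in the original frame.) The converse direction has the same defect in your sketch as in the paper: (\ref{5.7.*}) is strictly weaker than (\ref{5.6}), so constancy of the squared sum does not return parallelity, and constancy of $K_{1}+K_{2}$ yields neither. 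In short, you correctly located the gap, but the parallel-frame device does not fill it; as the arguments stand, only the weaker implication ``$\widetilde{M}_{K}$ parallel $\Rightarrow K_{1}^{2}+K_{2}^{2}$ constant'' is actually proved.
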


\subsection{Evolute Surfaces in $\mathbb{E}^{4}$}

\begin{definition}
The normal transport surface $\widetilde{M}$ of $M$ is called evolute
surface of $M$ in $\mathbb{E}^{4}$ if the equality
\begin{equation}
\left \langle \widetilde{x}_{u_{i}},x_{u_{j}}\right \rangle =0,\text{ }1\leq
i,j\leq 2,  \label{6.1}
\end{equation}%
holds for all $x_{u_{j}}\in T_{p}M$ .
\end{definition}

Observe that, The tangent $2$-planes at a point $p\in M$ and at the
corresponding point $\varphi (p)\in \widetilde{M}$ are mutually
orthogonal, and the vector $\overrightarrow{p\varphi
(p)}=~\overrightarrow{w}(p)$, $\overrightarrow{w}(p)\in T_{p}^{\perp }M$ is the normal vector to $M$ \cite%
{Ce}.

Let $\widetilde{M}$ be a evolute surface of $M$ in $\mathbb{E}^{4}$. Then by
use of (\ref{5.4}) with (\ref{6.1}) we can get%
\begin{eqnarray}
0 &=&\left \langle \widetilde{x}_{u},x_{u}\right \rangle =\left(
1-f_{1}c_{1}^{11}-f_{2}c_{2}^{11}\right) g_{11}-\left(
f_{1}c_{1}^{12}+f_{2}c_{2}^{12}\right) g_{21},  \notag \\
0 &=&\left \langle \widetilde{x}_{u},x_{v}\right \rangle =\left(
1-f_{1}c_{1}^{11}-f_{2}c_{2}^{11}\right) g_{12}-\left(
f_{1}c_{1}^{12}+f_{2}c_{2}^{12}\right) g_{22},  \label{6.2} \\
0 &=&\left \langle \widetilde{x}_{v},x_{u}\right \rangle =-\left(
f_{1}c_{1}^{12}+f_{2}c_{2}^{12}\right) g_{11}+\left(
1-f_{1}c_{1}^{22}-f_{2}c_{2}^{22}\right) g_{21},  \notag \\
0 &=&\left \langle \widetilde{x}_{v},x_{v}\right \rangle =-\left(
f_{1}c_{1}^{12}+f_{2}c_{2}^{12}\right) g_{12}+\left(
1-f_{1}c_{1}^{22}-f_{2}c_{2}^{22}\right) g_{22}.  \notag
\end{eqnarray}%
From now on we assume that the surface patch $x(u,v)$ satisfies the metric
condition $g_{12}=0.$ So the equations in (\ref{6.2}) turn into%
\begin{eqnarray}
f_{1}c_{1}^{11}+f_{2}c_{2}^{11} &=&1,  \notag \\
f_{1}c_{1}^{22}+f_{2}c_{2}^{22} &=&1,  \label{6.3} \\
f_{1}c_{1}^{12}+f_{2}c_{2}^{12} &=&0.  \notag
\end{eqnarray}%
Consequently by the use of (\ref{6.3}) with (\ref{2.14}) we get
\begin{equation}
f_{1}H_{1}+f_{2}H_{2}=1.  \label{6.4}
\end{equation}%
So, we obtain the following result.

\begin{theorem}
Let $M$ be local surface in $\mathbb{E}^{4}$ with $g_{12}=0$. Then the
normal transport surface $\widetilde{M}$ in $\mathbb{E}^{4}$ is evolute
surface of $M~$if and only if the first and second mean curvatures $%
H_{1},~H_{2}~$satisfies the condition (\ref{6.4}).
\end{theorem}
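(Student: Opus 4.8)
The plan is to argue directly from the defining condition (\ref{6.1}) using the tangent-vector formulas (\ref{5.4}) and (\ref{5.4*}) already derived. First I would substitute those expressions for $\widetilde{x}_{u}$ and $\widetilde{x}_{v}$ into the four orthogonality conditions $\langle \widetilde{x}_{u_{i}},x_{u_{j}}\rangle =0$. Because $N_{1},N_{2}$ are chosen orthogonal to the tangent plane, every normal term in (\ref{5.4})--(\ref{5.4*}) is annihilated by pairing with $x_{u}$ or $x_{v}$, so each inner product collapses to a linear combination of $g_{11},g_{12},g_{22}$ whose coefficients are the bracketed tangential factors $1-f_{1}c_{1}^{11}-f_{2}c_{2}^{11}$, $f_{1}c_{1}^{12}+f_{2}c_{2}^{12}$, and $1-f_{1}c_{1}^{22}-f_{2}c_{2}^{22}$. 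This reproduces the system (\ref{6.2}) with no further input.

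Next I would impose $g_{12}=g_{21}=0$. Since $g_{11},g_{22}\neq 0$ on a regular patch, the first line of (\ref{6.2}) forces $f_{1}c_{1}^{11}+f_{2}c_{2}^{11}=1$, the last forces $f_{1}c_{1}^{22}+f_{2}c_{2}^{22}=1$, and the two mixed lines both force $f_{1}c_{1}^{12}+f_{2}c_{2}^{12}=0$; this is exactly (\ref{6.3}). Adding the first two equations of (\ref{6.3}), dividing by two, and using $H_{\alpha}=(c_{\alpha}^{11}+c_{\alpha}^{22})/2$ from (\ref{2.14}) yields (\ref{6.4}). Every passage from (\ref{6.1}) down to (\ref{6.3}) is a chain of equalities valid on a regular patch, hence reversible, so the evolute condition is equivalent to the full system (\ref{6.3}); the forward half of the claim, that an evolute surface satisfies (\ref{6.4}), then follows at once.

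The step I expect to be the real obstacle is the converse. Equation (\ref{6.4}) is precisely the half-sum of the first two lines of (\ref{6.3}), so it records only their trace and carries no information about their individual values nor about the off-diagonal relation $f_{1}c_{1}^{12}+f_{2}c_{2}^{12}=0$. Thus (\ref{6.4}) is strictly weaker than (\ref{6.3}) and cannot recover it in general, so a literal converse does not hold without additional input. To make the equivalence rigorous I would take the genuine characterization to be the full system (\ref{6.3}) --- which the reversible computation above already establishes as equivalent to (\ref{6.1}) --- and regard (\ref{6.4}) as its diagonal summary; the stated ``if and only if'' then holds once a supplementary hypothesis is added (for instance an umbilic-type condition that forces the off-diagonal equation and makes the two diagonal equations of (\ref{6.3}) coincide), which upgrades (\ref{6.4}) back to the full system (\ref{6.3}).
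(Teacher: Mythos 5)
Your derivation is, step for step, the paper's own proof of the forward implication: the paper pairs (\ref{5.4}) and (\ref{5.4*}) with the defining condition (\ref{6.1}) to obtain the system (\ref{6.2}), sets $g_{12}=0$ to reduce it to (\ref{6.3}), and then combines the two diagonal equations via (\ref{2.14}) to reach (\ref{6.4}) --- and there it stops, announcing the theorem. So on the implication ``evolute $\Rightarrow$ (\ref{6.4})'' you and the paper coincide exactly, and your argument is sound.

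Your diagnosis of the converse is also correct, and the defect you found lies in the paper, not in your reconstruction: the paper offers no argument for ``(\ref{6.4}) $\Rightarrow$ evolute,'' and that implication is false in general. Concretely, take the cylinder $x(u,v)=(r\cos u,\,r\sin u,\,v,\,0)$ in $\mathbb{E}^{4}$ with $N_{1}=(\cos u,\sin u,0,0)$ and $N_{2}=(0,0,0,1)$. Then $g_{12}=0$, $c_{1}^{11}=-1/r$, $c_{1}^{12}=c_{1}^{22}=0$, and all $c_{2}^{ik}=0$, hence $H_{1}=-1/(2r)$ and $H_{2}=0$. The offsets $f_{1}=-2r$, $f_{2}=0$ satisfy $f_{1}H_{1}+f_{2}H_{2}=1$, i.e.\ (\ref{6.4}), yet the second equation of (\ref{6.3}) fails, since $f_{1}c_{1}^{22}+f_{2}c_{2}^{22}=0\neq 1$; indeed $\langle \widetilde{x}_{v},x_{v}\rangle = g_{22}=1\neq 0$, so $\widetilde{M}$ (here itself a regular cylinder) is not an evolute of $M$. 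Thus (\ref{6.4}) is necessary but not sufficient; the equivalence actually established by the computation is (\ref{6.1}) $\Leftrightarrow$ (\ref{6.3}), exactly as you say, and the stated ``if and only if'' requires either replacing (\ref{6.4}) by the full system (\ref{6.3}) or adding a hypothesis (such as your umbilic-type condition, forcing the Weingarten forms to be proportional to the identity) under which (\ref{6.3}) collapses to its trace (\ref{6.4}). Note also that the corollary immediately following the theorem, characterizing when $\widetilde{M}_{H}$ is an evolute by $\Vert \overrightarrow{H}\Vert =1$, inherits the same one-directional gap.
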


\begin{corollary}
Let $M$ be local surface in $\mathbb{E}^{4}$ with $g_{12}=0$. Then the
normal transport surface $\widetilde{M}_{H}$ in $\mathbb{E}^{4}$ is evolute
surface of $M~$if and only if the mean curvature of $M$ is equal to one.
\end{corollary}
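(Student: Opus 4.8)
The plan is to specialize the general evolute condition, already characterized in the preceding Theorem, to the case where the offset functions are chosen to be the two mean curvatures. Concretely, for the normal transport surface $\widetilde{M}_H$ we set $f_1 = H_1$ and $f_2 = H_2$ by the definition of $H$-type surface in (\ref{5.8}). The preceding Theorem tells us that, under the standing assumption $g_{12}=0$, the surface $\widetilde{M}$ is an evolute surface of $M$ if and only if the condition (\ref{6.4}), namely $f_1 H_1 + f_2 H_2 = 1$, holds. Substituting the $H$-type offset functions into this equation is the entire engine of the proof.

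First I would invoke the previous Theorem to reduce the evolute condition for $\widetilde{M}_H$ to (\ref{6.4}). Then I would substitute $f_\alpha = H_\alpha$ into (\ref{6.4}) to obtain
\begin{equation*}
H_1^2 + H_2^2 = 1.
\end{equation*}
The left-hand side is exactly the squared norm of the mean curvature vector, $\|\overrightarrow{H}\|^2 = \sum_{\alpha=1}^{2} H_\alpha^2$, by (\ref{2.12}). Since the mean curvature $H$ of $M$ is defined as $H = \|\overrightarrow{H}\|$, the equation $\|\overrightarrow{H}\|^2 = 1$ is equivalent to $H = 1$ (taking the nonnegative square root). Reading the chain of equivalences in both directions establishes the claimed "if and only if."

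The steps would be carried out in this order: cite the prior Theorem to get the evolute criterion (\ref{6.4}); insert the $H$-type offsets; identify the resulting sum of squares with $\|\overrightarrow{H}\|^2$ via (\ref{2.12}); and conclude $H=1$. I do not expect any genuine obstacle here, since the argument is a direct substitution; the only point requiring a word of care is the passage from $\|\overrightarrow{H}\|^2 = 1$ to $H = 1$, where one uses that the mean curvature $H$ is by definition a nonnegative quantity, so that the positive root is the correct one. Every implication is reversible, which is what yields the stated equivalence rather than a one-directional statement.
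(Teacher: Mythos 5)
Your proposal is correct and follows exactly the route the paper intends: the corollary is stated as an immediate consequence of the preceding theorem, obtained by substituting the $H$-type offsets $f_\alpha = H_\alpha$ into condition (\ref{6.4}), which yields $H_1^2 + H_2^2 = \left\Vert \overrightarrow{H}\right\Vert^2 = 1$, i.e.\ $H = 1$ since $H = \left\Vert \overrightarrow{H}\right\Vert \geq 0$. Your added remark about taking the nonnegative square root is a small but legitimate point of care that the paper leaves implicit.
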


In \cite{Ce} M. A. Cheshkova gave the following results.

\begin{theorem}
\cite{Ce} Let $M$ be local surface in $\mathbb{E}^{4}$. If the normal
transport surface $\widetilde{M}$ in $\mathbb{E}^{4}$ is evolute surface of $%
M~$then $M$ has flat normal bundle.
\end{theorem}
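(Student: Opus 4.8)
The plan is to start from the evolute condition. By Definition (the evolute surface condition), $\widetilde{M}$ being an evolute surface of $M$ means $\langle \widetilde{x}_{u_i}, x_{u_j}\rangle = 0$ for all $i,j$. The key observation is that the tangent vectors $\widetilde{x}_u$ and $\widetilde{x}_v$, expanded via (\ref{5.4}) and (\ref{5.4*}), decompose into a tangential part (coefficients of $x_u,x_v$) and a normal part (coefficients of $N_1,N_2$). The evolute condition forces the tangential part of each $\widetilde{x}_{u_i}$ to be orthogonal to the whole tangent space $T_pM$, and since $\{x_u,x_v\}$ spans $T_pM$, this kills the tangential part entirely. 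First I would therefore record that the evolute condition is equivalent to the system (\ref{6.3}), exactly as derived in the excerpt under the assumption $g_{12}=0$.

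Next I would turn to the normal curvature. Since the tangential coefficients of $\widetilde{x}_u$ and $\widetilde{x}_v$ vanish, the vectors $\widetilde{x}_u$ and $\widetilde{x}_v$ lie entirely in the normal space $T_p^\perp M = \mathrm{span}\{N_1,N_2\}$; from (\ref{5.4}) and (\ref{5.4*}) they read
\begin{equation*}
\widetilde{x}_u = \bigl((f_1)_u - f_2 T_1^{12}\bigr)N_1 + \bigl((f_2)_u + f_1 T_1^{12}\bigr)N_2,
\end{equation*}
\begin{equation*}
\widetilde{x}_v = \bigl((f_1)_v - f_2 T_2^{12}\bigr)N_1 + \bigl((f_2)_v + f_1 T_2^{12}\bigr)N_2.
\end{equation*}
The strategy is to exploit the integrability (mixed-partials) condition. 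One computes $\widetilde{x}_{uv}$ from the first expression and $\widetilde{x}_{vu}$ from the second, and uses $\widetilde{x}_{uv}=\widetilde{x}_{vu}$. Projecting the difference onto the normal frame $\{N_1,N_2\}$, and using the Weingarten equations (\ref{5.3}) to differentiate the $N_i$, one isolates the coefficient involving $(T_1^{12})_v - (T_2^{12})_u$. By (\ref{2.18}), this quantity is exactly $\sqrt{g}\,K_N$ up to a nonzero factor, so the compatibility forces it to vanish.

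I would carry the steps in this order: (i) invoke Definition (evolute) and the derivation (\ref{6.2})--(\ref{6.3}) to eliminate the tangential part; (ii) write $\widetilde{x}_u,\widetilde{x}_v$ as purely normal vectors; (iii) impose $\widetilde{x}_{uv}=\widetilde{x}_{vu}$ and extract the $N_1$- and $N_2$-components; (iv) after cancellations mediated by the constraint equations, arrive at a relation of the form $c\bigl((T_1^{12})_v-(T_2^{12})_u\bigr)=0$ with $c\neq 0$; (v) conclude via (\ref{2.18}) that $K_N=0$, i.e., $M$ has flat normal bundle by the Remark. The main obstacle will be step (iii)--(iv): the mixed-partial computation generates many terms, and one must carefully use the three constraint equations (\ref{6.3}) (equivalently the relations $f_1 H_1+f_2 H_2=1$ and $f_1 c_1^{12}+f_2 c_2^{12}=0$) to cancel the spurious terms and cleanly isolate the torsion-derivative combination. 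A cleaner alternative worth trying is to differentiate the orthogonality relations $\langle \widetilde{x}_{u_i},x_{u_j}\rangle=0$ directly and combine them with the Codazzi and Ricci equations, which may package the torsion terms more transparently than a raw mixed-partials expansion.
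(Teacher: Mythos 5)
The paper never actually proves this statement: it is quoted from Cheshkova \cite{Ce} without proof, so there is no in-paper argument to compare yours against. The closest thing is the paper's proof of the parallel-surface analogue (Fr\"ohlich's theorem that $\widetilde{M}$ is parallel if and only if $M$ has flat normal bundle), which uses exactly the technique you propose: there the defining condition kills the \emph{normal} components of $\widetilde{x}_u,\widetilde{x}_v$ (equations (\ref{5.6})), and equality of mixed partials produces $-f_2\left\{(T_1^{12})_v-(T_2^{12})_u\right\}=0$, whence $K_N=0$ by (\ref{2.18}). Your plan is the mirror image of this, with the evolute condition killing the \emph{tangential} components instead, and it does go through; in fact the computation is cleaner than you fear. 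Writing $\widetilde{x}_u=b_1N_1+b_2N_2$ and $\widetilde{x}_v=d_1N_1+d_2N_2$ with $b_1=(f_1)_u-f_2T_1^{12}$, $b_2=(f_2)_u+f_1T_1^{12}$, $d_1=(f_1)_v-f_2T_2^{12}$, $d_2=(f_2)_v+f_1T_2^{12}$, the $N_1$-component of $\widetilde{x}_{uv}=\widetilde{x}_{vu}$ reduces to $f_2\left\{(T_1^{12})_v-(T_2^{12})_u\right\}=0$ and the $N_2$-component to $f_1\left\{(T_1^{12})_v-(T_2^{12})_u\right\}=0$: the mixed derivatives $(f_i)_{uv}$, the terms linear in the $(f_i)_{u^j}$, and the quadratic torsion terms all cancel identically, so (\ref{6.3}) is not needed for any cancellation. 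Note also that your step (i) requires no assumption $g_{12}=0$: invertibility of the Gram matrix $(g_{ij})$ alone forces the tangential part of each $\widetilde{x}_{u_i}$ to vanish, which is what makes the argument valid in the full generality of the statement.

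The one genuine (though small) gap is step (iv)'s unsupported claim that the coefficient $c$ is nonzero. What the computation actually delivers is the pair of relations above, with coefficients $f_2$ and $f_1$ separately, and nothing a priori prevents one of them from vanishing on an open set. This is where (\ref{6.3}) is truly needed: its first equation, $f_1c_1^{11}+f_2c_2^{11}=1$, shows that $f_1$ and $f_2$ cannot vanish simultaneously at any point, so $(T_1^{12})_v-(T_2^{12})_u\equiv0$ and hence $K_N\equiv0$ by (\ref{2.18}), i.e. $M$ has flat normal bundle. (For comparison, the paper's proof of the parallel case silently assumes $f_2\neq0$ at the same juncture; in your evolute setting the required nonvanishing comes for free from (\ref{6.3}), so your argument, once this sentence is added, is actually tighter than the model it imitates.)
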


\begin{theorem}
\cite{Ce} The minimal surfaces have no evolutes.
\end{theorem}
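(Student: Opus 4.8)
The plan is to derive a contradiction from the defining equation of an evolute together with the vanishing of the mean curvature vector. Suppose $M$ is minimal and, contrary to the assertion, that it carries an evolute surface $\widetilde{M}$ with offset functions $f_{1},f_{2}$. Minimality means that the mean curvature vector is zero, $\overrightarrow{H}=H_{1}N_{1}+H_{2}N_{2}=0$; since $N_{1},N_{2}$ are linearly independent this is equivalent to $H_{1}=H_{2}=0$. The whole argument will consist in showing that the evolute condition nonetheless forces $f_{1}H_{1}+f_{2}H_{2}=1$, which is then plainly impossible.

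First I would read off the scalar content of the evolute condition (\ref{6.1}). Demanding $\langle \widetilde{x}_{u_{i}},x_{u_{j}}\rangle =0$ for all $j$ says that each $\widetilde{x}_{u_{i}}$ is orthogonal to the entire tangent plane $T_{p}M=\mathrm{span}\{x_{u},x_{v}\}$. Since the tangential part of $\widetilde{x}_{u_{i}}$, exhibited in (\ref{5.4}) and (\ref{5.4*}), already lies in $T_{p}M$, that tangential part must vanish identically. Using $c_{\alpha}^{12}=c_{\alpha}^{21}$ this yields exactly the three relations (\ref{6.3}); I note in passing that this step uses only the linear independence of $x_{u},x_{v}$, not the normalisation $g_{12}=0$.

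Next I would add the first two equations of (\ref{6.3}) and substitute the identity (\ref{2.14}), $H_{\alpha}=\tfrac{1}{2}(c_{\alpha}^{11}+c_{\alpha}^{22})$, which is valid for an arbitrary first fundamental form. This produces precisely (\ref{6.4}), namely $f_{1}H_{1}+f_{2}H_{2}=1$. Feeding in the minimality data $H_{1}=H_{2}=0$ collapses the left-hand side to $0$, giving $0=1$. This contradiction shows that no offset functions can satisfy the evolute condition, so a minimal surface admits no evolute.

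I do not anticipate a genuine obstacle here: once (\ref{6.4}) is in hand the conclusion is a one-line substitution. The only point deserving care is the derivation of (\ref{6.3}). If one is content to work under the paper's standing hypothesis $g_{12}=0$, one may simply quote the evolute characterisation already established and invoke (\ref{6.4}) verbatim; to cover a general metric it is cleaner to phrase the evolute condition as the vanishing of the tangential component of each $\widetilde{x}_{u_{i}}$, as above, whence (\ref{6.3}) and then (\ref{6.4}) follow without any assumption on $g_{12}$.
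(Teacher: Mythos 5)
Your argument is correct, but note that the paper does not actually prove this theorem: it is stated as a quotation from Cheshkova \cite{Ce}, with no proof supplied. What you give is the natural in-house proof, obtained by specializing the paper's own evolute characterization: the evolute condition forces (\ref{6.3}), adding its two diagonal equations and using the trace identity (\ref{2.14}) gives (\ref{6.4}), namely $f_{1}H_{1}+f_{2}H_{2}=1$, while minimality (equivalent to $H_{1}=H_{2}=0$, since $N_{1},N_{2}$ are linearly independent) makes the left-hand side vanish, a contradiction. Your treatment is in fact slightly more general than the framework in which the paper operates, in two respects. First, you derive (\ref{6.3}) by observing that the evolute condition says the tangential component of each $\widetilde{x}_{u_{i}}$, read off from (\ref{5.4}) and (\ref{5.4*}), is a vector of $T_{p}M$ orthogonal to all of $T_{p}M$, hence zero because the Gram matrix $(g_{ij})$ of a regular patch is invertible; this removes the standing hypothesis $g_{12}=0$ under which the paper obtains (\ref{6.3}) and (\ref{6.4}). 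Second, your passing appeal to the symmetry $c_{\alpha}^{12}=c_{\alpha}^{21}$ is dispensable (which is just as well, since the symmetry asserted in (\ref{2.7}) is not automatic in arbitrary coordinates): without it, the vanishing of the tangential components yields four scalar equations instead of three, and your contradiction uses only the two diagonal ones, $f_{1}c_{1}^{11}+f_{2}c_{2}^{11}=1$ and $f_{1}c_{1}^{22}+f_{2}c_{2}^{22}=1$, together with (\ref{2.14}), which holds for any first fundamental form. So the proof stands, and it genuinely fills in a statement the paper leaves unproved.
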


\begin{example}
Let $M$ is a translation surface $x(u,v)=\alpha (u)+\beta (v)$ in $\mathbb{E}%
^{4}$ , then the translation curves $\alpha (u)=\left( \alpha _{1}(u),\alpha
_{2}(u),0,0\right) $ and $\beta (v)=\left( 0,0,\beta _{1}(v),\beta
_{2}(v)\right) $ are plane curves of mutually orthogonal $2$-planes. The
surface $\widetilde{M}$ is a translation surface, and its translation curves
$\widetilde{\alpha }(u)$, $\widetilde{\beta }(u)$ are the evolutes of the
curves $\alpha (u)$, $\beta (u)$. If $u,v,\kappa _{\alpha },\kappa _{\beta }$
and $\{t_{\alpha },n_{\alpha }\},\{t_{\beta },n_{\beta }\}$ are the arc
length, the curvature, and the Frenet frame of the curves $\alpha (u)$ and $%
\beta (v)$, correspondingly, then
\begin{eqnarray*}
\widetilde{x}(u,v) &=&\alpha (u)+\frac{1}{\kappa _{\alpha }}n_{\alpha
}(u)+\beta (v)+\frac{1}{\kappa _{\beta }}n_{\beta }(v) \\
&=&\alpha (u)+\beta (v)+\frac{1}{\kappa _{\alpha }}n_{\alpha }(u)+\frac{1}{%
\kappa _{\beta }}n_{\beta }(v) \\
&=&x(u,v)+\frac{1}{\kappa _{\alpha }}n_{\alpha }(u)+\frac{1}{\kappa _{\beta }%
}n_{\beta }(v).
\end{eqnarray*}%
The tangent space to $\widetilde{M}$ at an arbitrary point $p=\widetilde{x}%
(u,v)$ of $\widetilde{M}$ is spanned by%
\begin{equation*}
\begin{array}{l}
\vspace{2mm}\widetilde{x}_{u}=\left( \frac{1}{\kappa _{\alpha }}\right)
^{\prime }n_{\alpha }(u), \\
\widetilde{x}_{v}=\left( \frac{1}{\kappa _{\beta }}\right) ^{\prime
}n_{\beta }(v).%
\end{array}%
\end{equation*}%
Consequently, the normal transport surface $\widetilde{M}$ of $M$ satisfies
the equality
\begin{equation*}
\left \langle \widetilde{x}_{u_{i}},x_{u_{j}}\right \rangle =0.
\end{equation*}%
Hence, $\widetilde{M}$ is the evolute of $M$ \cite{Ce}.
\end{example}

\section{\textbf{An Application}}

Rotation surfaces were studied in \cite{Vr} by Vranceanu as surfaces in $%
\mathbb{E}^{4}$ which are defined by the following parametrization
\begin{equation}
M:x(u,v)=(r(v)\cos v\cos u,r(v)\cos v\sin u,r(v)\sin v\cos u,r(v)\sin v\sin
u)  \label{7.1}
\end{equation}%
where $r(v)$ is a real valued non-zero function.

We have the following result.

\begin{theorem}
Let $\widetilde{M}$ be a normal transport surface of the Vranceanu surface $%
M $ given with the parametrization (\ref{5.1}). If $\ \widetilde{M}$ is an
evolute surface of $M$ in $\mathbb{E}^{4}$ then
\begin{equation}
\widetilde{M}:\widetilde{x}(u,v)=\lambda \mu e^{\mu v}(-\sin v\cos u,-\sin
v\sin u,\cos v\cos u,\cos v\sin u),  \label{7.2}
\end{equation}%
where $\lambda $ and $\mu $ are non zero constants.
\end{theorem}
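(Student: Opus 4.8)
The plan is to reduce everything to the evolute characterization already in hand: the surface $\widetilde{M}$ is an evolute of $M$ precisely when the offset functions solve the system (\ref{6.3}), which presupposes $g_{12}=0$. So I would first verify that the Vranceanu patch (\ref{7.1}) automatically satisfies this metric condition, and then solve (\ref{6.3}) for $f_1,f_2$ and read off $\widetilde{x}$ from (\ref{5.1}). Writing $x=r(v)\,e_1$ with $e_1=(\cos v\cos u,\cos v\sin u,\sin v\cos u,\sin v\sin u)$ a unit vector, a direct differentiation gives $x_u=r\,a$ and $x_v=r'e_1+r\,b$, where $a=(e_1)_u$ and $b=(e_1)_v$. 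One checks that $\{e_1,a,b,c\}$, with $c=a_v$, is an orthonormal frame of $\mathbb{E}^4$, from which $g_{11}=r^2$, $g_{22}=r^2+(r')^2$, and crucially $g_{12}=0$. Hence the hypothesis of the evolute theorem is met with no extra assumption.

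With the frame in place I would take the two unit normals to be $N_1=(r\,e_1-r'b)/W$, where $W=\sqrt{r^2+(r')^2}$, and $N_2=c$, since both are orthogonal to the tangent plane spanned by $a$ and $r'e_1+r\,b$. The second fundamental form then follows from $x_{uu}=-r\,e_1$, $x_{uv}=r'a+r\,c$ and $x_{vv}=(r''-r)e_1+2r'b$ via (\ref{2.4}): one gets $c_{11}^1=-r^2/W$, $c_{12}^1=0$, $c_{22}^1=[rr''-r^2-2(r')^2]/W$, together with $c_{11}^2=c_{22}^2=0$ and $c_{12}^2=r$. Raising an index by (\ref{2.7}) with $g^{11}=1/r^2$, $g^{22}=1/W^2$ produces the Weingarten forms $c_\alpha^{ij}$ needed on the left-hand side of (\ref{6.3}).

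Feeding these into the evolute system (\ref{6.3}), the first equation forces $f_1=-W$, the third forces $f_2=0$, and the second becomes a condition on $r$ alone. This is the crux of the argument, and the place I expect the real content to sit: (\ref{6.3}) imposes three scalar conditions on only two unknowns, so after substituting $f_1=-W,\ f_2=0$ the middle equation must collapse to a differential constraint on the profile function. A short computation shows that constraint is the ordinary differential equation $(r')^2=r\,r''$; dividing by $r\,r'$ and integrating yields $r'=\mu r$, hence $r(v)=\lambda e^{\mu v}$ for nonzero constants $\lambda,\mu$. In other words, the evolute requirement is not merely a prescription for the offsets but actually selects the exponential profile, which is where the factor $e^{\mu v}$ in (\ref{7.2}) originates.

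Finally I would substitute $f_1=-W$, $f_2=0$ back into (\ref{5.1}): since $-W N_1=-(r\,e_1-r'b)$, the $e_1$–terms cancel and $\widetilde{x}=x-WN_1=r\,e_1-(r\,e_1-r'b)=r'\,b$. With $r'=\lambda\mu e^{\mu v}$ and $b=(-\sin v\cos u,-\sin v\sin u,\cos v\cos u,\cos v\sin u)$ this is exactly the claimed form (\ref{7.2}). The only genuine obstacle is the over-determined system producing $(r')^2=rr''$; the remainder is careful bookkeeping of the orthonormal frame and of the sign and index conventions in (\ref{2.7}), where I would take the most care to avoid an erroneous sign in $f_1$ or a misidentified normal direction.
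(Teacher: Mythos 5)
Your proposal is correct and follows essentially the same route as the paper: the paper likewise imposes the evolute conditions on the Vranceanu patch in an adapted frame, obtains $f_2=0$, $f_1=\pm\sqrt{r^2+(r')^2}$, and the constraint $r r''-(r')^2=0$, hence $r=\lambda e^{\mu v}$, and then substitutes back to get (\ref{7.2}). The only differences are cosmetic: you route through the reduced system (\ref{6.3}) (after verifying $g_{12}=0$) instead of computing the inner products $\left\langle \widetilde{x}_{u_i},x_{u_j}\right\rangle$ directly, and your $N_1$ has the opposite orientation to the paper's, which is why you find $f_1=-W$ while the paper finds $f_1=+\sqrt{r^2+(r')^2}$; the vector $f_1N_1$, and hence $\widetilde{x}$, is identical.
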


\begin{proof}
Let $M$ be a Vranceanu surfaces given with the parametrization (\ref{7.1}).
We choose a moving frame $\{X_{u},X_{v},N_{1},N_{2}\}$ such that $%
X_{u},X_{v} $ are tangent to $M$ and $N_{1},N_{2}$ normal to $M$ as given
the following (see, \cite{Yo1}):
\begin{eqnarray*}
x_{u} &=&r(-\cos v\sin u,\cos v\cos u,-\sin v\sin u,\sin v\cos u), \\
x_{v} &=&(B(v)\cos u,B(v)\sin u,C(v)\cos u,C(v)\sin u), \\
N_{1} &=&\frac{1}{A}(-C(v)\cos u,-C(v)\sin u,B(v)\cos u,B(v)\sin u), \\
N_{2} &=&(-\sin v\sin u,\sin v\cos u,\cos v\sin u,-\cos v\cos u),
\end{eqnarray*}%
where
\begin{eqnarray*}
A(v) &=&\sqrt{r^{2}(v)+(r^{\prime }(v))^{2}},\text{ }\,B(v)=r^{\prime
}(v)\cos v-r(v)\sin v, \\
C(v) &=&r^{\prime }(v)\sin v+r(v)\cos v.
\end{eqnarray*}

Suppose that $\widetilde{M}$ is the normal transport surface of the
Vranceanu surface $M$ in $\mathbb{E}^{4}$ then we have
\begin{eqnarray}
\left \langle \widetilde{x}_{u},x_{u}\right \rangle &=&r^{2}(v)-f_{1}\left(
\frac{r^{2}(v)}{\sqrt{r^{2}(v)+(r^{\prime }(v))^{2}}}\right) ,  \notag \\
\left \langle \widetilde{x}_{u},x_{v}\right \rangle &=&f_{2}r(v),
\label{7.3} \\
\left \langle \widetilde{x}_{v},x_{u}\right \rangle &=&f_{2}r(v),  \notag \\
\left \langle \widetilde{x}_{v},x_{v}\right \rangle &=&r^{2}(v)+(r^{\prime
}(v))^{2}+f_{1}\left( \frac{r(v)r^{\prime \prime }(v)-2(r^{\prime
}(v))^{2}-r^{2}(v)}{\sqrt{r^{2}(v)+(r^{\prime }(v))^{2}}}\right) .  \notag
\end{eqnarray}

Furthermore, if $\widetilde{M}$ is an evolute surface of the Vranceanu
surface $M$ in $\mathbb{E}^{4}$ then using (\ref{6.1}) with (\ref{7.3}) we
obtain%
\begin{equation}
\begin{array}{l}
\vspace{2mm}f_{2}=0, \\
f_{1}=\sqrt{r^{2}(v)+(r^{\prime }(v))^{2}}.%
\end{array}
\label{7.4}
\end{equation}%
Moreover, from the first and fourth equations of (\ref{7.3}) one can get
\begin{equation*}
r(v)r^{\prime \prime }(v)-(r^{\prime }(v))^{2}=0
\end{equation*}%
which has a non-trivial solution
\begin{equation}
r(v)=\lambda e^{\mu v}  \label{7.5}
\end{equation}

As a consequence of (\ref{7.4}) with (\ref{7.5}) we get the desired result.
\end{proof}

\begin{remark}
The Vranceanu surface given with $r(v)=\lambda e^{\mu v}$ is a flat surface
with vanishing normal curvature \cite{ABBKMO}.
\end{remark}

\vskip3mm
\begin{tabular}{l}
Kadri Arslan \& Bet\"{u}l Bulca \\
Department of Mathematics \\
Uluda\u{g} University \\
16059 Bursa, TURKEY \\
E-mails: arslan@uludag.edu.tr,\ \  \\
bbulca@uludag.edu.tr\ \ \ \ \ \ \ \ \
\end{tabular}

\vskip3mm
\begin{tabular}{l}
Beng\"{u} K\i l\i \c{c} Bayram \\
Department of Mathematics \\
Bal\i kesir University \\
Bal\i kesir, TURKEY \\
E-mail: benguk@bal\i kesir.edu.tr,%
\end{tabular}

\vskip3mm
\begin{tabular}{l}
G\"{u}nay \"{O}zt\"{u}rk \\
Department of Mathematics \\
Kocaeli University \\
Kocaeli, TURKEY \\
E-mail: ogunay@kocaeli.edu.tr,%
\end{tabular}

\end{document}